\documentclass[12pt,A4]{amsart}
\usepackage{amsfonts,amsthm,amsmath}
\usepackage{amssymb}
\usepackage{rotating}
\usepackage{tikz}
\usepackage{verbatim}
\usepackage{amscd}
\usepackage[latin2]{inputenc}
\usepackage{t1enc}
\usepackage[mathscr]{eucal}
\usepackage{indentfirst}
\usepackage{graphicx}
\usepackage{graphics}
\usepackage{pict2e}
\usepackage{mathrsfs}
\usepackage{enumerate}

\usepackage[pagebackref]{hyperref}
\hypersetup{colorlinks=true}

\usepackage{color}

\numberwithin{equation}{section}
\topmargin 0.6in
\textheight=8.5in
\textwidth=6.4in
\voffset=-.68in
\hoffset=-.68in

\def\blue{\textcolor{blue}}
\def\red{\textcolor{red}}
\def\green{\textcolor{green}}

\theoremstyle{plain}
\newtheorem{theorem}{Theorem}[section]
\newtheorem{lemma}[theorem]{Lemma}
\newtheorem{corollary}[theorem]{Corollary}

\theoremstyle{definition}
\newtheorem{Def}[theorem]{Definition}
\newtheorem{example}[theorem]{Example}
\newtheorem{conjecture}[theorem]{Conjecture}

\newtheorem{?}[theorem]{Problem}

\newcommand{\N}{\mathbb{N}}

\newcommand\des{\mathop{\rm des}}

\def\Bax{\mathrm{Bax}}
\def\S{\mathfrak{S}}

\def\Tlp{\mathrm{Tlp}}

\def\maj{\operatorname{maj}}

\def\imaj{\operatorname{imaj}}

\def\L{\mathfrak{L}}
\def\B{\mathfrak{B}}

\def\IDB{\operatorname{IDB}}
\def\IDT{\operatorname{IDT}}
\def\DES{\operatorname{DES}}
\def\IDES{\operatorname{IDES}}
\def\DB{\operatorname{DB}}
\def\DT{\operatorname{DT}}

\makeatother

\begin{document}
\title[Dilks' bijectivity conjecture]{Proof of Dilks' bijectivity conjecture on\\ Baxter permutations}
\author[Z. Lin]{Zhicong Lin}
\address[Zhicong Lin]{Research Center for Mathematics and Interdisciplinary Sciences, Shandong University, Qingdao 266237, P.R. China}
\email{linz@sdu.edu.cn}
\author[J. Liu]{Jing Liu}
\address[Jing Liu]{Research Center for Mathematics and Interdisciplinary Sciences, Shandong University, Qingdao 266237, P.R. China}
\email{202012008@mail.sdu.edu.cn}

\date{\today}

\begin{abstract} 
Baxter permutations originally arose in studying common fixed points of two commuting continuous functions. 
In 2015, Dilks proposed a conjectured bijection between Baxter permutations and non-intersecting triples of lattice paths in terms of inverse descent bottoms, descent positions and inverse descent tops. We prove this bijectivity conjecture by investigating its connection with the  Fran\c{c}on--Viennot bijection. As a result, we obtain a permutation interpretation of the $(t,q)$-analog of the Baxter numbers
$$
\frac{1}{{n+1\brack 1}_q{n+1\brack 2}_q}\sum_{k=0}^{n-1}q^{3{k+1\choose2}}{n+1\brack k}_q{n+1\brack k+1}_q{n+1\brack k+2}_qt^k, 
$$
where ${n\brack k}_q$ denote the $q$-binomial coefficients.
\end{abstract}

\keywords{Baxter permutations;  Fran\c{c}on--Viennot bijection; Descent bottoms; Inverse descents; Descent tops }

\maketitle

\section{Introduction}
Baxter permutations originated from G. Baxter's study~\cite{Bax} of  fixed points for the composite of commuting functions. Let $\S_n$ be the set of all permutations of $[n]:=\{1,2,\ldots,n\}$. A permutation $\pi=\pi_1\cdots\pi_n\in\S_n$ is a {\em Baxter permutation} if it avoids the vincular patterns $2\underline{41}3$ and $3\underline{14}2$, i.e., there is no indices $1\leq i<j<j+1<k\leq n$ such that  
$$
\pi_{j+1}<\pi_i<\pi_k<\pi_j \quad\text{or}\quad \pi_{j}<\pi_k<\pi_i<\pi_{j+1}.
$$
Denote by $\Bax_n$ the set of all Baxter permutations in $\S_n$. 

By inventing a generating tree for Baxter permutations,  algebraically manipulating the recurrence relation, and then magically guessing the correct enumeration formula, Chung, Graham, Hoggatt and Kleiman~\cite{chung} proved that 
\begin{equation}\label{eq:chung}
|\Bax_n|=\frac{1}{{n+1\choose 1}{n+1\choose 2}}\sum_{k=0}^{n-1}{n+1\choose k}{n+1\choose k+1}{n+1\choose k+2}. 
\end{equation}
A bijective proof was constructed by Viennot~\cite{Vie} and a functional equation proof was provided by Bousquet-M\'elou~\cite{bo}. The number in the right-hand side of~\eqref{eq:chung} is denoted by $B_n$, which is known as the $n$-th {\em Baxter number}. Numerous other combinatorial objects have been found to be  counted by the Baxter numbers in the literature,  some of which are in bijection with Baxter permutations (see~\cite{Dil,Noy,LK,Yan} and the references therein). The main objective of this paper is to prove a bijectivity conjecture of Dilks~\cite{Dil} relating three descent-based statistics on Baxter permutations to horizontal steps of non-intersecting triples of lattice paths. 

\begin{Def}[Descent-based statistics on permutations]\label{def:desbt}
For any permutation $\pi=\pi_1 \pi_2 \ldots \pi_n \in\S_n$, define the following three fundamental statistics:
\begin{itemize}
          \item $\DES(\pi)=\{i\in[n-1]:\pi_i>\pi_{i+1}\}$, the set of all  {\em positions of the descents} of $\pi$;
          \item $\DT(\pi)=\{\pi_{i}:\pi_{i}>\pi_{i+1}\}\subseteq [2,n]$, the set of all {\em descent tops} of $\pi$;
          \item $\DB(\pi)=\{\pi_{i+1}:\pi_{i}>\pi_{i+1}\}$, the set of all {\em descent bottoms} of $\pi$.
\end{itemize}
For the sake of convenience, we set $\IDES(\pi)=\DES(\pi^{-1})$, $\IDT(\pi)=\DT(\pi^{-1})$ and $\IDB(\pi)=\DB(\pi^{-1})$. We introduce the set of {\em modified descent tops} of $\pi$ to be 
$$\widetilde\DT(\pi)=\{\pi_{i}-1:\pi_{i}>\pi_{i+1}\}\subseteq [n-1]$$
and set $\widetilde\IDT(\pi)=\widetilde\DT(\pi^{-1})$. 
\end{Def}

For a subset $S\subseteq[n]$, a lattice path of length $n$ (i.e., has $n$ steps)  confined to the quarter plane $\N^2$ using only vertical and horizontal steps is said to {\em encode $S$} if for any $1\leq i\leq n$, 
$$
i\in S\Leftrightarrow  \text{ the $i$-th step of the lattice path is horizontal}. 
$$
Given a Baxter permutation $\pi\in\Bax_n$, define $\Gamma(\pi)$ to be the triple of lattice paths, each consisting of $n-1$ steps, where 
\begin{itemize}
\item the bottom one starts at $(2,0)$ and encodes $\IDB(\pi)$;
\item the middle one starts at $(1,1)$ and encodes $\DES(\pi)$;
\item and the top one starts at $(0,2)$ and encodes $\widetilde{\IDT}(\pi)$. 
\end{itemize}

\begin{example} Let $\pi=235419786$. Then $\IDB(\pi)=\{1,3,6,7\}$, $\DES(\pi)=\{3,4,6,8\}$ and $\widetilde{\IDT}(\pi)=\{3,4,7,8\}$. The triple $\Gamma(\pi)$ of lattice paths  are drawn in Fig.~\ref{dilk:trip}. 
\end{example}

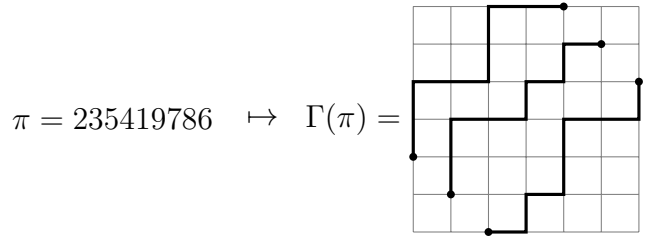
\begin{figure}
\begin{center}
\begin{tikzpicture}[scale=.5]

\draw[step=1,color=gray] (8,0) grid (14,6); 
 \draw(4,3) node{$\mapsto$};
  \draw(6.4,3) node{$\Gamma(\pi)=$};
  \draw(0,3) node{$\pi=235419786$};

%%%bottom
\draw [very thick]
(10,0)--(11,0)--(11,1)--(12,1)--(12,3)--(14,3)--(14,4);
\draw(10,0) node{\circle*{3}};\draw(14,4) node{\circle*{3}};

%%%%middle
\draw [very thick]
(9,1)--(9,3)--(11,3)--(11,4)--(12,4)--(12,5)--(13,5);
\draw(9,1) node{\circle*{3}};\draw(13,5) node{\circle*{3}};

%%%top
\draw [very thick]
(8,2)--(8,4)--(10,4)--(10,6)--(12,6);
\draw(8,2) node{\circle*{3}};\draw(12,6) node{\circle*{3}};

\end{tikzpicture}
\end{center}
\caption{An example of the correspondence $\pi\mapsto\Gamma(\pi)$.\label{dilk:trip}}
\end{figure}

For $0\leq k\leq n-1$, let $\Bax_{n,k}:=\{\pi\in\Bax_n: \des(\pi)=k\}$, where $\des(\pi):=|\DES(\pi)|$. Denote by $\Tlp_{n,k}$ the set of all non-intersecting triples of lattice paths, each of length $n-1$ using only vertical or horizontal step, from $(0,2)$, $(1,1)$, $(2,0)$ to $(k,n-k+1)$, $(k+1,n-k)$, $(k+2,n-k-1)$. In his Ph.D. thesis, Dilks~\cite[Conjecture~3.5]{Dil} proposed  the following bijectivity conjecture. 
\begin{conjecture}[Dilks's bijectivity conjecture]\label{conj:dilks}
For $0\leq k\leq n-1$, the correspondence $\Gamma: \Bax_{n,k}\rightarrow\Tlp_{n,k}$ is a bijection. 
\end{conjecture}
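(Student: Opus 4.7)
The plan is to factor the map $\Gamma$ through the Fran\c{c}on--Viennot bijection $\phi$ between $\S_n$ and a set of labeled Motzkin paths (Laguerre histories). Under $\phi$, each letter $i\in[n]$ contributes one step whose type (peak, valley, double ascent, or double descent) together with its label records how $i$ sits in $\pi$ relative to its neighbours. This is the natural common framework for the six statistics $\DES$, $\DB$, $\DT$, $\IDES$, $\IDB$, $\IDT$, and it is what the introduction announces as the main tool.

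First, I would verify that $\Gamma$ is well-defined. The equality $\des(\pi)=\des(\pi^{-1})$ for $\pi\in\Bax_n$ (a consequence of the generating-tree structure of Baxter permutations, or of the $\phi$-image description obtained below) gives $|\IDB(\pi)|=|\DES(\pi)|=|\widetilde{\IDT}(\pi)|=k$, so the three encoded paths reach the prescribed endpoints of $\Tlp_{n,k}$. The non-intersection of the triple is equivalent to a system of prefix inequalities between $|\IDB(\pi)\cap[i]|$, $|\DES(\pi)\cap[i]|$ and $|\widetilde{\IDT}(\pi)\cap[i]|$; I plan to check these by examining the initial segment $\pi_1\cdots\pi_{i+1}$ and using the Baxter avoidance to control how its descents, descent tops and descent bottoms interleave.

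The heart of the proof is then to describe $\phi(\Bax_n)$ intrinsically by a local condition on consecutive steps and labels of the labeled Motzkin path, and simultaneously to read off $\IDB(\pi)$, $\DES(\pi)$ and $\widetilde{\IDT}(\pi)$ directly from $\phi(\pi)$. Once these two ingredients are in hand, the inverse of $\Gamma$ is constructed as follows: from a non-intersecting triple $(L_{\mathrm{top}},L_{\mathrm{mid}},L_{\mathrm{bot}})\in\Tlp_{n,k}$ one recovers the three statistics; the non-intersection inequalities force the data to fit the shape of some element of $\phi(\Bax_n)$; and decoding through $\phi^{-1}$ returns a unique $\pi\in\Bax_{n,k}$ with $\Gamma(\pi)=(L_{\mathrm{top}},L_{\mathrm{mid}},L_{\mathrm{bot}})$.

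The step I expect to be the main obstacle is the identification of $\phi(\Bax_n)$ together with the matching between its defining local condition and the global non-intersection of the three lattice paths. Translating the vincular patterns $2\underline{41}3$ and $3\underline{14}2$ into a clean condition on consecutive Fran\c{c}on--Viennot steps, and then showing that this condition is precisely what the prefix inequalities enforce, is the delicate point; this is where the Fran\c{c}on--Viennot framework should pay off, by putting both the avoidance condition and the non-intersection condition into the same coordinate system.
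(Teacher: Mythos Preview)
Your overall strategy---pass through the Fran\c{c}on--Viennot bijection, characterise its image on Baxter permutations by a local condition on consecutive labels (the ``Baxter histories''), and then match this with the three lattice paths---is exactly the framework the paper uses. The description of $\psi_{FV}(\Bax_n)$ by a local rule is known (Viennot), and the paper indeed shows that from a Baxter history one reads off $\DB(\pi)$ as the bottom path and, after some work, $\IDES(\pi)$ as the middle path (Lemma~\ref{dir:vie}); composed with the observation $\pi\in\Bax_n\Leftrightarrow\pi^{-1}\in\Bax_n$, this gives the bottom and middle paths of $\Gamma$.

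The gap in your plan is the sentence ``read off $\IDB(\pi)$, $\DES(\pi)$ and $\widetilde{\IDT}(\pi)$ directly from $\phi(\pi)$''. The top path that the Fran\c{c}on--Viennot/Viennot construction actually produces encodes
\[
\widehat{\DT}(\pi):=(\DT(\pi)\cup\{\pi_n\})\setminus\{n\},
\]
not $\widetilde{\DT}(\pi)=\{i-1:i\in\DT(\pi)\}$; and these sets are genuinely different (take $\pi=213$, where $\widehat{\DT}=\{2\}$ but $\widetilde{\DT}=\{1\}$). Thus Viennot's known bijection $\Psi$ and Dilks' map $\Gamma$ agree on the bottom and middle paths but \emph{differ} on the top path. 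This mismatch is precisely why Dilks' statement was a conjecture rather than a corollary of Viennot's bijection, and your proposal does not address it: you cannot simply ``decode through $\phi^{-1}$'' because the triple in $\Tlp_{n,k}$ does not hand you the Laguerre history.

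The paper closes this gap with a separate injectivity argument (Lemma~\ref{lem:inj}): two Baxter permutations sharing $\DB$, $\IDES$ and $\widetilde{\DT}$ must also share $\widehat{\DT}$, hence coincide by the bijectivity of $\Psi$. The proof of that lemma is the delicate part and requires tracking the rightmost placeholder through the inverse Fran\c{c}on--Viennot algorithm; it is not a consequence of the non-intersection inequalities alone. Your ``main obstacle'' paragraph anticipates the translation between pattern avoidance and the local label condition, but that step is already in Viennot; the real obstacle is this $\widehat{\DT}$ versus $\widetilde{\DT}$ discrepancy, which your plan overlooks.
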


We will prove Dilks' bijectivity conjecture by investigating its connection with Viennot's original  bijection~\cite{Vie} between $\Bax_{n,k}$ and $\Tlp_{n,k}$ constructed by using the  Fran\c{c}on--Viennot bijection. Dilks' bijectivity conjecture has three interesting consequences. 

For any permutation $\pi\in\S_n$, define three different major indices associated with $\DES(\pi)$, $\IDB(\pi)$ and $\widetilde\IDT(\pi)$ as
$$
\maj(\pi)=\sum_{i\in\DES(\pi)} i, \quad\imaj_B(\pi)=\sum_{i\in\IDB(\pi)} i\quad\text{and}\quad\imaj_T(\pi)=\sum_{i\in\widetilde\IDT(\pi)} i.
$$
It has already been observed by Dilks~\cite[Conjecture~3.4]{Dil} that the following permutation  interpretation of the $(t,q)$-analog of the Baxter numbers $B_n$ is a direct consequence of 
\begin{enumerate}
\item a natural bijection~\cite[Theorem~2.4]{Dil} between $\Tlp_{n,k}$ and plane partitions in a $k\times(n-1-k)\times3$ box;
\item and a $q$-counting formula for plane partitions in~\cite[Theorem~7.21.7]{St2}. 
\end{enumerate}
\begin{corollary} 
For any $n\geq1$, we have
$$
\sum_{\pi\in\Bax_n}t^{\des(\pi)}q^{\imaj_B(\pi)+\maj(\pi)+\imaj_T(\pi)}=\frac{1}{{n+1\brack 1}_q{n+1\brack 2}_q}\sum_{k=0}^{n-1}q^{3{k+1\choose2}}{n+1\brack k}_q{n+1\brack k+1}_q{n+1\brack k+2}_qt^k.
$$
\end{corollary}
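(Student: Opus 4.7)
The plan is to transport the left-hand side through the two additional bijections cited by the author right before the corollary, using as input the bijectivity of $\Gamma:\Bax_{n,k}\to\Tlp_{n,k}$ now established in this paper. Given $\pi\in\Bax_{n,k}$ with $\Gamma(\pi)=(P_{\mathrm{bot}},P_{\mathrm{mid}},P_{\mathrm{top}})$, the encoding convention in the definition of $\Gamma$ immediately yields that the sum of the positions of the horizontal steps of $P_{\mathrm{bot}}$, $P_{\mathrm{mid}}$, $P_{\mathrm{top}}$ equals $\imaj_B(\pi)$, $\maj(\pi)$, $\imaj_T(\pi)$ respectively. Writing these positions as $p_1<\cdots<p_k$ for a single path, the bound $p_j\geq j$ gives
$$
p_1+\cdots+p_k=\binom{k+1}{2}+|\lambda(P)|,
$$
where $\lambda(P)$ is the partition inside a $k\times(n-1-k)$ rectangle whose $j$-th part records the number of vertical steps preceding the $j$-th horizontal step of $P$. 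Summing over the three paths in $\Gamma(\pi)$ produces
$$
\imaj_B(\pi)+\maj(\pi)+\imaj_T(\pi)=3\binom{k+1}{2}+|\lambda(P_{\mathrm{bot}})|+|\lambda(P_{\mathrm{mid}})|+|\lambda(P_{\mathrm{top}})|.
$$

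Next I would invoke Dilks' bijection~\cite[Theorem~2.4]{Dil} identifying $\Tlp_{n,k}$ with the set of plane partitions in a $k\times(n-1-k)\times 3$ box, and read off from that construction that the three horizontal layers of the resulting plane partition are exactly the partitions $\lambda(P_{\mathrm{bot}}),\lambda(P_{\mathrm{mid}}),\lambda(P_{\mathrm{top}})$. Consequently, the sum $|\lambda(P_{\mathrm{bot}})|+|\lambda(P_{\mathrm{mid}})|+|\lambda(P_{\mathrm{top}})|$ equals the size of the corresponding plane partition. Applying Stanley's MacMahon-type $q$-enumeration~\cite[Theorem~7.21.7]{St2} to evaluate the size generating function of plane partitions in this box, and multiplying by $q^{3\binom{k+1}{2}}$, one recovers exactly the $k$-th summand on the right-hand side of the corollary. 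Summing over $0\leq k\leq n-1$ completes the argument.

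The only non-routine point in this plan is the layer identification under the bijection of~\cite[Theorem~2.4]{Dil}, which is really a property of that pre-existing construction rather than new work; granted it, the three-step transport (Baxter permutations $\to$ path triples $\to$ plane partitions $\to$ closed-form product) turns the corollary into immediate bookkeeping, as the remark preceding the corollary already anticipates. I therefore expect the hardest part of writing this up to be not a mathematical obstacle but a bookkeeping one: fixing conventions for $\lambda(P)$ (column-reading versus row-reading, which of the three corners is the origin of the box) so that the sign/shift conventions in~\cite[Theorem~2.4]{Dil} and in~\cite[Theorem~7.21.7]{St2} fit together on the nose with the factor $q^{3\binom{k+1}{2}}$.
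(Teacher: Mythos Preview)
Your proposal is correct and follows exactly the route the paper itself indicates: the paper gives no separate proof of this corollary but simply notes (in the paragraph preceding it) that, granted the bijectivity of $\Gamma$, the identity is a direct consequence of Dilks' bijection~\cite[Theorem~2.4]{Dil} to plane partitions together with the $q$-enumeration~\cite[Theorem~7.21.7]{St2}. Your write-up just makes explicit the bookkeeping (the $3\binom{k+1}{2}$ shift coming from $\sum p_j=\binom{k+1}{2}+|\lambda(P)|$) that the paper leaves implicit.
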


A permutation $\pi\in\S_n$ is {\em alternating} if 
$$
\pi_1<\pi_2>\pi_3<\pi_4>\pi_5<\cdots
$$
and is {\em reverse alternating} if all the above inequalities are reversed. In other words, if we denote
$[n]_o$ (resp.~$[n]_e$) the set of all odd (resp.~even) integers in $[n]$, then $\pi$ is alternating if $\DES(\pi)=[n-1]_e$, while $\pi$ is reverse alternating if $\DES(\pi)=[n-1]_o$. Let 
$$
C_n:=\frac{1}{n+1}{2n\choose n}
$$
be the {\em$n$-th Catalan number} (see~\cite[pp.~219-229]{St2} for many interpretations of Catalan numbers). The following result, due to Cori,  Dulucq and Viennot~\cite{Vie2}, about enumeration of alternating Baxter permutations is a direct consequence of Conjecture~\ref{conj:dilks}.

\begin{corollary}[Cori,  Dulucq and Viennot]
The number of  (reverse) alternating Baxter permutations of length $n$ is 
\begin{equation}\label{alt:bax}
C_{\lfloor n/2\rfloor}C_{\lfloor(n+1)/2\rfloor}.
\end{equation}
\end{corollary}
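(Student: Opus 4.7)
The plan is to deduce the corollary directly from Dilks' bijection $\Gamma:\Bax_{n,k}\to\Tlp_{n,k}$ established above. The complement map $\pi\mapsto\pi^c$ with $\pi^c_i=n+1-\pi_i$ is an involution on $\Bax_n$ (it exchanges the patterns $2\underline{41}3$ and $3\underline{14}2$) and swaps alternating with reverse alternating permutations, so I would first reduce to the alternating case, characterized by $\DES(\pi)=[n-1]_e$.

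Next I would use $\Gamma$ to translate the enumeration into a lattice-path problem. Since the middle path of $\Gamma(\pi)$ encodes $\DES(\pi)$, the condition $\DES(\pi)=[n-1]_e$ pins down this middle path $P_M$ completely: with $k=\lfloor(n-1)/2\rfloor$, the path $P_M$ is the unique staircase of length $n-1$ starting at $(1,1)$ whose $i$-th step is horizontal iff $i$ is even. A non-intersecting triple with prescribed middle $P_M$ is then just a pair $(P_B,P_T)$ of lattice paths such that both $P_B$ and $P_T$ are vertex-disjoint from $P_M$; the pair $(P_B,P_T)$ is automatically vertex-disjoint because every lattice point $(a,b)$ of $P_M$ satisfies $b-a\in\{0,1\}$, forcing $P_B$ to stay in $\{b-a\le -1\}$ and $P_T$ in $\{b-a\ge 2\}$. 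Hence the number of triples factors as $(\#P_B)\cdot(\#P_T)$.

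To compute each factor, I would pass from a lattice path to the $\pm 1$ walk tracking $b-a$, turning the problem into counting walks with prescribed endpoints that stay on one side of a horizontal line. A single application of the reflection principle then evaluates each count as a binomial difference that collapses, via the standard identity $\binom{2m-1}{m}-\binom{2m-1}{m+1}=C_m$ and its companion for equal endpoints two units above the forbidden level, to a Catalan number. Splitting by the parity of $n$ yields $\#P_B=C_{m+1}$ and $\#P_T=C_m$ when $n=2m+1$, and $\#P_B=\#P_T=C_m$ when $n=2m$; in both cases the product equals $C_{\lfloor n/2\rfloor}C_{\lfloor(n+1)/2\rfloor}$, as required.

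The main obstacle is bookkeeping rather than insight: the endpoints of $P_B$ and $P_T$ as well as the precise shape of the staircase $P_M$ shift with the parity of $n$, and one must verify separately in each subcase that the reflection argument collapses to the claimed Catalan number. Once the middle path is fixed via $\Gamma$, everything else is routine.
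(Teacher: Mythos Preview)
Your proposal is correct and follows exactly the route the paper implicitly intends: the paper gives no detailed argument for this corollary, only stating that it is ``a direct consequence'' of the bijection $\Gamma$, and your proof supplies precisely those details---fixing the middle path by the condition $\DES(\pi)=[n-1]_e$, observing that the staircase $P_M$ separates $P_B$ from $P_T$ so the count factors, and evaluating each factor as a Catalan number by a reflection argument. The parity bookkeeping you flag is indeed the only subtlety, and your case split handles it correctly.
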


The integer sequence in~\eqref{alt:bax} appears as A005817 in the OEIS~\cite{oeis}, where several other intriguing combinatorial interpretations are known. In particular, it has been proved in~\cite[Theorem~4.8]{LWZ} recently that this sequence also enumerates {\em$231$-avoiding ballot permutations}. It would be interesting to see whether there is any bijection between these two classes of pattern avoiding permutations. 

A permutation $\pi\in\S_n$ is {\em Genocchi} if 
$$
\pi_i>\pi_{i+1}\Leftrightarrow\text{ $\pi_i$ is even}.
$$
In other words, $\pi$ is a Genocchi permutation if $\DT(\pi)=[n]_e$. Genocchi permutations were introduced by Dumont~\cite{Du} to interpret the {\em Genocchi numbers}. The third consequence of Conjecture~\ref{conj:dilks} is the following new interpretation of Catalan numbers. 
\begin{corollary}
The number of permutations $\pi\in\Bax_n$ such that $\pi$ is reverse alternating and $\pi^{-1}$ is Genocchi equals the Catalan number $C_{\lfloor n/2\rfloor}$. 
\end{corollary}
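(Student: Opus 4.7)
The plan is to apply Dilks' bijection $\Gamma$, established by the main result of this paper, to translate the count into an enumeration of non-intersecting triples with the middle and top paths pinned down, and then to identify that enumeration with a standard ballot count.

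First I would decode the two hypotheses on $\pi$. Since $\pi$ is reverse alternating, $\DES(\pi)=[n-1]_o$, which forces the middle path of $\Gamma(\pi)$ to be the staircase starting at $(1,1)$ whose $i$-th step is horizontal for odd $i$ and vertical for even $i$. Since $\pi^{-1}$ is Genocchi, $\DT(\pi^{-1})=[n]_e$, hence $\IDT(\pi)=[n]_e$ and $\widetilde{\IDT}(\pi)=[n-1]_o$, so the top path of $\Gamma(\pi)$ is the same staircase, shifted to start at $(0,2)$. In particular $\des(\pi)=\lfloor n/2\rfloor=:k$, and what remains is to count bottom paths from $(2,0)$ to $(k+2,n-k-1)$ for which the resulting triple in $\Tlp_{n,k}$ is non-intersecting.

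The middle and top staircases are automatically disjoint, because at every time the top is the middle translated by $(-1,+1)$. The middle staircase visits exactly the lattice points $(i,j)$ in its range with $i-j\in\{0,1\}$, and a direct check — using that along the bottom path one has $j\le\lceil n/2\rceil-1$, so every reachable point on the diagonal $i=j+1$ already lies on the middle staircase — shows that the bottom avoids the middle iff $i-j\ge 2$ everywhere along it. After translating by $(-2,0)$, the enumeration becomes: count lattice paths from $(0,0)$ to $(k,n-k-1)$ that stay weakly below the diagonal $i=j$. The standard ballot formula gives
$$
\frac{k-(n-k-1)+1}{k+1}\binom{n-1}{k},
$$
which equals $C_{\lfloor n/2\rfloor}$ in both parities of $n$.

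The main obstacle is the reduction at the start of the third paragraph: verifying that ``bottom avoids the middle staircase'' is really equivalent to ``$i-j\ge 2$ everywhere on the bottom path''. Once this geometric equivalence is granted, the rest is a direct application of Dilks' bijection together with the classical Catalan enumeration of sub-diagonal lattice paths.
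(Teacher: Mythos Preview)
Your argument is correct and is exactly the deduction the paper has in mind: the corollary is stated as a direct consequence of the bijection $\Gamma$, with the details left to the reader, and you have supplied precisely those details---fixing the middle and top paths to the odd-step staircases, reducing to the condition $i-j\ge 2$ on the bottom path, and finishing with the ballot count. The one point you flag as an obstacle is indeed routine: since the bottom path has $i\ge 2$ and $j\le\lceil n/2\rceil-1$ throughout, the first time $i-j$ drops to $1$ produces a point $(j+1,j)$ with $1\le j\le\lceil n/2\rceil-1$, and every such point lies on the middle staircase.
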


\begin{example}
The five permutations  in $\Bax_6$ such that itself is reverse alternating and its inverse is Genocchi are
$$
21 436 5,\,215463,\,\, 324165,\,\,325461,\,\,435261. 
$$
\end{example}
The above interpretation of Catalan numbers  is analog to a result due to Guibert  and  Linusson~\cite{GL}, which asserts that permutations $\pi\in\Bax_n$ such that both $\pi$ and  $\pi^{-1}$ are (reverse) alternating are counted by $C_{\lfloor n/2\rfloor}$. A combinatorial bijection between these two models seems not easy. 

The rest of this paper is devoted to a proof of Dilks' bijectivity conjecture.

\section{Proof of Dilks' bijectivity conjecture}

Our starting point of the proof of  Dilks' bijectivity conjecture is the following crucial observation. 

\begin{lemma}\label{lem:inv}
If $\pi \in \Bax_n$, then $\pi^{-1}\in\Bax_n$.
\end{lemma}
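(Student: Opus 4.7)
The plan is to argue by contrapositive: show that if $\pi^{-1}$ contains the vincular pattern $2\underline{41}3$, then $\pi$ must contain $3\underline{14}2$, and symmetrically if $\pi^{-1}$ contains $3\underline{14}2$ then $\pi$ contains $2\underline{41}3$. The key conceptual observation is that taking inverse turns an ``adjacent-position'' vincular condition into an ``adjacent-value'' condition; one then trades the latter back for the former through a short interval-scanning argument.

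Concretely, suppose $\pi^{-1}$ contains $2\underline{41}3$ at indices $i<j<j+1<k$, so that $\pi^{-1}_{j+1}<\pi^{-1}_i<\pi^{-1}_k<\pi^{-1}_j$. Set $A=\pi^{-1}_{j+1}$, $B=\pi^{-1}_i$, $C=\pi^{-1}_k$, $D=\pi^{-1}_j$. Reading the relation in $\pi$ instead, I get positions $A<B<C<D$ with values $\pi_A=j+1$, $\pi_B=i$, $\pi_C=k$, $\pi_D=j$, forming a $3142$-shape in which the ``$3$'' and the ``$2$'' happen to be consecutive integers: $\pi_A=\pi_D+1$.

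The main point is now this: since no integer lies strictly between $\pi_D=j$ and $\pi_A=j+1$, every intermediate position $p\in(A,D)$ is either \emph{low} (i.e.\ $\pi_p<\pi_D$) or \emph{high} (i.e.\ $\pi_p>\pi_A$). Because $B$ is low, $C$ is high and $B<C$, there must exist some $\ell\in[B,C-1]$ with $\pi_\ell$ low and $\pi_{\ell+1}$ high. Then $A<\ell<\ell+1<D$ and $\pi_\ell<\pi_D<\pi_A<\pi_{\ell+1}$, which is precisely a $3\underline{14}2$-occurrence in $\pi$, contradicting $\pi\in\Bax_n$.

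The case where $\pi^{-1}$ contains $3\underline{14}2$ is handled by the mirror argument: it produces a $2413$-shape in $\pi$ whose ``$2$'' and ``$3$'' are consecutive values with the smaller on the left, and the same low/high scan across the open interval between them yields a $2\underline{41}3$-occurrence in $\pi$. The only piece of routine bookkeeping is checking that the adjacent pair $\ell,\ell+1$ really sits strictly inside $(A,D)$, which is immediate from $A<B\le\ell\le C-1<D-1$. I do not expect any genuine obstacle; the entire content of the lemma is the translate-and-scan step above, but it is essential because the rest of the paper will freely exploit statistics of $\pi^{-1}$ (such as $\IDB$ and $\widetilde{\IDT}$) within the Baxter class.
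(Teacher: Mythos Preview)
Your argument is correct, but it differs from the paper's in an interesting way. Both proofs start the same: assume $\pi^{-1}$ has a $2\underline{41}3$ at indices $i<j<j+1<k$, pass to $\pi$, and aim to exhibit a $3\underline{14}2$ there. The paper then runs an induction on $l=\pi^{-1}_k-\pi^{-1}_i$ (the gap between the two \emph{middle positions} $B$ and $C$ in your notation), shrinking $l$ until $B$ and $C$ become adjacent and the pattern appears directly. You instead exploit that the \emph{outer values} $\pi_A=j+1$ and $\pi_D=j$ are already consecutive integers, so a single low/high scan of the interval $[B,C]$ locates an adjacent pair $\ell,\ell+1$ with the required order---no induction needed. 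Your route is shorter and more transparent; the paper's induction is slightly more mechanical but perhaps obscures the reason the argument works. Either way the symmetric $3\underline{14}2$ case follows identically (you by the mirrored scan, the paper by invoking complement symmetry at the outset).
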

\begin{proof}
Assume that $\pi^{-1}$ is not a Baxter permutation, then $\pi^{-1}$ contains  $3\underline{14}2$ pattern or $2\underline{41}3$ pattern. By the complement symmetry of these two patterns, we can assume that $\pi^{-1}$ contains the pattern $2\underline{41}3$, i.e., there exists indices $1\leq i<j<j+1<k\leq n$ such that 
$\pi^{-1}_{j+1}<\pi^{-1}_i<\pi^{-1}_k<\pi^{-1}_j$. We aim to show that $\pi$ contains the pattern $3\underline{14}2$ by induction on $l=\pi^{-1}_k-\pi^{-1}_i$, which will finish the proof of the lemma. 

If $l=1$, i.e.,  $\pi^{-1}_i=\pi^{-1}_k -1$, then the subsequence $(j+1)\,\underline{ik}\,j$ in $\pi$ forms an instance of $3\underline{14}2$ pattern. If $l>1$, then we need to consider two cases. If $\pi^{-1}_k -1$ locates before $\pi^{-1}_j$ in $\pi^{-1}$, then the subsequence $(j+1)\,\underline{m k}\,j$ in $\pi$, where the $m$-th ($m<j$) letter of $\pi^{-1}$ is $\pi^{-1}_k -1$, forms an $3\underline{14}2$ pattern and we are done. Otherwise, $\pi^{-1}_k -1$ locates after $\pi^{-1}_{j+1}$ in $\pi^{-1}$, then $\pi^{-1}_i \,\underline{\pi^{-1}_j \pi^{-1}_{j+1}}\,(\pi^{-1}_k-1)$ is still a $2\underline{41}3$ pattern in $\pi^{-1}$, but with $(\pi^{-1}_k-1)-\pi^{-1}_i=l-1<l$. This proves that $\pi$ contains the pattern $3\underline{14}2$ by induction. 
\end{proof}

In view of Lemma~\ref{lem:inv}, Conjecture~\ref{conj:dilks} is equivalent to the assertion that the correspondence  $\pi\mapsto\Gamma'(\pi)$ is a bijection between  $\Bax_{n,k}$ and $\Tlp_{n,k}$, where $\Gamma'(\pi):=\Gamma(\pi^{-1})$ is the triple of lattice paths defined by 
\begin{itemize}
\item the bottom one starts at $(2,0)$ and encodes $\DB(\pi)$;
\item the middle one starts at $(1,1)$ and encodes $\IDES(\pi)$;
\item and the top one starts at $(0,2)$ and encodes $\widetilde{\DT}(\pi)$. 
\end{itemize}
Due to the cardinality reason, it remains to show that the correspondence $\Gamma'$ is a well-defined injection.

\subsection{The correspondence $\Gamma'$ is well defined}
We will use a known recursive construction of Baxter permutations. For any $\pi\in\S_n$, a letter $\pi_i$ is a {\em left-to-right maxima} (resp.~right-to-left maxima) of $\pi$ if $\pi_j<\pi_i$ for all $j<i$ (resp.~$j>i$).
\begin{lemma}[See~\cite{chung,bo}]\label{lem:chung}
For any $\pi\in \Bax_n$, $\pi$ is obtained from some $\sigma \in \Bax_{n-1}$ by inserting $n$ into one of the following two kinds of positions:
\begin{itemize}
  \item immediately before a left-to-right maxima of $\sigma$;
  \item immediately after a right-to-left maxima of $\sigma$. 
\end{itemize}
\end{lemma}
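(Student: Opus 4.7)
The plan is to give a direct pattern-avoidance argument. Fix $\pi\in\Bax_n$, let $p$ be the position of $n$ in $\pi$, and let $\sigma\in\S_{n-1}$ be the word obtained by deleting this occurrence of $n$. Two statements must be verified: first, that $\sigma\in\Bax_{n-1}$; second, that $p$ lies either immediately before a left-to-right maximum or immediately after a right-to-left maximum of $\sigma$. The boundary cases $p=1$ and $p=n$ are vacuous ($\sigma_1$ is trivially a left-to-right maximum, $\sigma_{n-1}$ trivially a right-to-left maximum), so I would restrict attention to $1<p<n$.

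For $\sigma\in\Bax_{n-1}$, I would introduce the shift $\phi(q)=q$ for $q<p$ and $\phi(q)=q+1$ for $q\ge p$ carrying $\sigma$-positions into $\pi$-positions. A $2\underline{41}3$ or $3\underline{14}2$ in $\sigma$ at positions $i<j<j+1<k$ transfers immediately to the same pattern in $\pi$ unless $(j,j+1)=(p-1,p)$, the unique case where the letter $n$ of $\pi$ sits strictly between $\sigma_j$ and $\sigma_{j+1}$. In that one awkward case I would recycle $n$ itself as the ``4'' of the pattern: for a $2\underline{41}3$ in $\sigma$ the positions $\phi(i),p,\phi(j+1),\phi(k)$ of $\pi$ carry the values $\sigma_i,n,\sigma_{j+1},\sigma_k$ still satisfying $\sigma_{j+1}<\sigma_i<\sigma_k<n$; symmetrically a $3\underline{14}2$ in $\sigma$ produces a $3\underline{14}2$ in $\pi$ at positions $\phi(i),\phi(j),p,\phi(k)$. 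Either way $\pi\notin\Bax_n$, a contradiction.

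For the position of $n$, I would argue by contradiction: assume that $\pi_{p+1}$ is not a left-to-right maximum of $\sigma$ and that $\pi_{p-1}$ is not a right-to-left maximum of $\sigma$, so that there exist $a<p$ with $\pi_a>\pi_{p+1}$ and $b>p$ with $\pi_b>\pi_{p-1}$. I would then split on the sign of $\pi_{p-1}-\pi_{p+1}$. If $\pi_{p-1}>\pi_{p+1}$, the positions $p-1,p,p+1,b$ of $\pi$ carry $\pi_{p-1},n,\pi_{p+1},\pi_b$ with $\pi_{p+1}<\pi_{p-1}<\pi_b<n$, which is a $2\underline{41}3$. If $\pi_{p-1}<\pi_{p+1}$, then $\pi_a>\pi_{p+1}>\pi_{p-1}$ forces $a\le p-2$, and the positions $a,p-1,p,p+1$ carry $\pi_a,\pi_{p-1},n,\pi_{p+1}$ with $\pi_{p-1}<\pi_{p+1}<\pi_a<n$, a $3\underline{14}2$. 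Both branches contradict $\pi\in\Bax_n$.

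The main subtlety I expect is the substitution step in the first part: the unique pattern configuration in $\sigma$ that fails to transfer verbatim is precisely the one straddling the position $p$ of $n$, and the remedy is the small but essential observation that $n$ itself can be reused as the peak letter of the forbidden vincular pattern. Once that move is made, the remainder reduces to a short four-letter search in each sub-case.
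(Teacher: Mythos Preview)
Your argument is correct. The paper does not actually prove this lemma; it quotes it as a known structural fact from \cite{chung,bo} and uses it as a black box. So there is no ``paper proof'' to compare against, but your direct pattern-avoidance verification is exactly the standard way to establish it, and each step checks out: the transfer of a forbidden pattern from $\sigma$ to $\pi$ only fails when the vincular pair straddles position $p$, and your substitution of $n$ for the extreme letter repairs that case; in the second part, the inequalities on $\pi_{p-1},\pi_{p+1}$ automatically force $b\ge p+2$ (resp.\ $a\le p-2$), so the four-letter subsequences you exhibit are genuine instances of $2\underline{41}3$ and $3\underline{14}2$.
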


The following result shows that $\Gamma'$ is well defined. 
\begin{lemma}
For any $\pi\in \Bax_n$, the triple of lattice paths in $\Gamma'(\pi)$ are non-intersecting. 
\end{lemma}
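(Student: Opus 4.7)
The plan is to translate the non-intersection condition into two prefix-counting inequalities on $\DB(\pi)$, $\IDES(\pi)$, $\widetilde{\DT}(\pi)$ and then verify them for every $\pi\in\Bax_n$ by induction on $n$, using the recursive construction in Lemma~\ref{lem:chung}. Since each path takes one step per unit time and all three start on the anti-diagonal $x+y=2$, after $i$ steps the three paths lie on $x+y=2+i$. Hence non-intersection is equivalent to the three $x$-coordinates $2+h_B(i)$, $1+h_M(i)$, $h_T(i)$ remaining strictly decreasing throughout, where $h_B(i)$, $h_M(i)$, $h_T(i)$ denote the numbers of horizontal steps among the first $i$ steps of the bottom, middle and top paths. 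Because the pairwise differences change by at most one per step and start at exactly $1$, the non-intersection condition is equivalent to
\[
|\IDES(\pi)\cap[i]|\le|\DB(\pi)\cap[i]|\quad\text{and}\quad|\widetilde{\DT}(\pi)\cap[i]|\le|\IDES(\pi)\cap[i]|
\]
for every $0\le i\le n-1$.

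For the induction, I would write $\pi\in\Bax_n$ as an insertion of $n$ at position $p$ into some $\sigma\in\Bax_{n-1}$, either (Case~1) immediately before a left-to-right maxima $\sigma_p$ of $\sigma$, or (Case~2) immediately after a right-to-left maxima $\sigma_{p-1}$. A direct analysis of the new descent created (and, in Case~2, of the old descent at position $p-1$ that is destroyed) yields the following explicit formulas. In Case~1, $\DB(\pi)=\DB(\sigma)\sqcup\{\sigma_p\}$, $\widetilde{\DT}(\pi)=\widetilde{\DT}(\sigma)\sqcup\{n-1\}$ and $\IDES(\pi)=\IDES(\sigma)\sqcup\{n-1\}$; in Case~2 with $p<n$, $\DB(\pi)=\DB(\sigma)$, $\IDES(\pi)=\IDES(\sigma)$ and $\widetilde{\DT}(\pi)=(\widetilde{\DT}(\sigma)\setminus\{\sigma_{p-1}-1\})\cup\{n-1\}$; Case~2 with $p=n$ leaves all three sets unchanged. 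Granting these, the two prefix-count inequalities for $\pi$ reduce to those for $\sigma$ by splitting the range of $i$ at the thresholds $\sigma_p$, $\sigma_{p-1}-1$ and $n-1$: in each sub-range either both sides remain equal or both gain (respectively lose) the same number of elements, so the inductive hypothesis for $\sigma$ carries over.

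The main technical obstacle is justifying the $\IDES$ formulas, because a priori the insertion of $n$ might disturb the relative order of many pairs $(j,j+1)$. For $j\le n-2$ the positions of $j$ and $j+1$ in $\sigma$ are either both shifted by one or both unshifted when passing to $\pi$, so their relative order is preserved; for $j=n-1$ one must decide whether $n$ lands before or after $n-1$ in $\pi$. Here the structural content of Lemma~\ref{lem:chung} is essential: every left-to-right maxima of $\sigma$ occurs at or before the position of the global maximum $n-1$, so in Case~1 one has $p\le\sigma^{-1}(n-1)$, forcing $n$ to be inserted before $n-1$; conversely, the requirement that $\sigma_{p-1}$ be a right-to-left maxima forces $n-1$ to occupy some position $\le p-1$ in $\sigma$, so in Case~2 $n-1$ precedes $n$ in $\pi$. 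Once these positional facts are established, the remainder of the argument is routine bookkeeping, and the base case $n=1$ (where all three sets are empty and the paths have zero steps) is immediate.
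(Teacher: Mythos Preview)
Your proof is correct and follows essentially the same inductive approach as the paper: both arguments use the recursive construction of Lemma~\ref{lem:chung}, derive the identical update formulas for $\DB$, $\IDES$, $\widetilde{\DT}$ in each of the three insertion cases, and check that non-intersection is preserved. The only cosmetic difference is that you phrase non-intersection via the prefix-count inequalities $|\IDES\cap[i]|\le|\DB\cap[i]|$ and $|\widetilde{\DT}\cap[i]|\le|\IDES\cap[i]|$, whereas the paper describes the corresponding path modifications geometrically; your added justification of the $\IDES$ formulas (via the position of $n-1$ relative to left-to-right and right-to-left maxima) makes explicit a point the paper leaves implicit.
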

\begin{proof}
By Lemma~\ref{lem:chung}, $\pi$ is obtained from some $\sigma \in \Bax_{n-1}$ by inserting $n$ in one of the following three cases:
\begin{itemize}
\item The letter $n$ is inserted at the end of $\sigma$. Then 
$$
(\DB(\pi),\IDES(\pi),\widetilde\DT(\pi))=(\DB(\sigma),\IDES(\sigma),\widetilde\DT(\sigma)).
$$
\item The letter $n$ is inserted after a right-to-left maxima $\sigma_i$,  $i\neq n-1$, of $\sigma$. Then
\begin{align*}
&(\DB(\pi),\IDES(\pi))=(\DB(\sigma),\IDES(\sigma))\quad\text{and}\\
&\widetilde\DT(\pi)=(\widetilde\DT(\sigma)\setminus\{\sigma_i-1\})\cup\{n-1\}.
\end{align*}
\item The letter $n$ is inserted before a left-to-right maxima $\sigma_j$ of $\sigma$. Then
\begin{align*}
&\DB(\pi)=\DB(\sigma)\cup\{\sigma_j\},\\
&\IDES(\pi)=\IDES(\sigma)\cup\{n-1\},\\
&\widetilde\DT(\pi)=\widetilde\DT(\sigma)\cup\{n-1\}.
\end{align*}
\end{itemize}
It then follows by induction on $n$ that in either case, the triple of lattice paths in $\Gamma'(\pi)$ encoding $\DB(\pi)$, $\IDES(\pi)$ and $\widetilde\DT(\pi))$  are non-intersecting. In fact, 
\begin{itemize}
\item in the first case $\Gamma'(\pi)$ is obtained from $\Gamma'(\sigma)$ by adding a vertical step to each path; 
\item in the second case $\Gamma'(\pi)$ is obtained from $\Gamma'(\sigma)$ by adding a vertical step to the middle path and the bottom path, but changing the $(\sigma_i-1)$-th step of the top path from horizontal to vertical and then adding a horizontal step at the end; 
\item in the third case $\Gamma'(\pi)$ is obtained from $\Gamma'(\sigma)$ by adding a horizontal step to the middle path and the top path, but adding a horizontal step just after the $(\sigma_j-1)$-th step of the bottom path.
\end{itemize}
In either case, the resulting triple of lattice paths in $\Gamma'(\pi)$ are non-intersecting, which completes the proof of the lemma.
\end{proof}

%%%%%%%%%%%%%%%%%%%%%%%%%%%%%%%%%%%%%%%%%%
\subsection{Viennot's original bijection  between  $\Bax_{n,k}$ and $\Tlp_{n,k}$}%%%
%%%%%%%%%%%%%%%%%%%%%%%%%%%%%%%%%%%%%%%%%%
Viennot's original bijection $\Psi$ introduced in~\cite{Vie} between $\Bax_{n,k}$ and $\Tlp_{n,k}$ consists of two main steps, the first of which is the classical  {\em Fran\c{c}on--Viennot bijection}~\cite{Fran} between permutations and Laguerre histories. The main purpose here is to prove that $\Psi$ admits a direct description using the three descent-based statistics in Definition~\ref{def:desbt}, which is key to our proof of Dilks' bijectivity conjecture. 

To begin with, let us recall the Fran\c{c}on--Viennot bijection. A {\em Motzkin path} of length $n$ is a lattice path confined to the quarter plane $\N^2$, starting from the origin, using three kinds of steps
$$
U=(1, 1) \text{ (up step)},\quad H=(1,0) \text{ (horizontal step)}\quad \text{and}\quad D= (1,-1) \text{ (down step)},
$$
and ending at $(n,0)$. A Motzkin path whose each horizontal step receives either blue (denoted a $H_b$ step) or red (denoted a $H_r$ step) is called a {\em$2$-colored Motzkin path}. Such a path is encoded as a word of length $n$ over $\{U,H_b,H_r,D\}$. A {\em Laguerre history} of length $n$ is a pair $(w,\mu)$, where $w=w_1\cdots w_n$ is a $2$-colored Motzkin path and $\mu=(\mu_1,\mu_2,\cdots,\mu_n)$ is a weight function satisfying $1\leq\mu_i\leq h_i(w)$, where 
 $$h_i(w):=1+|\{j\mid j<i, w_j=U\}|-|\{j\mid j<i, w_j=D\}|$$
  is one plus the {\em height} of the starting point of the $i$-th step of $w$. Denote by $\L_n$ the set of all Laguerre histories of length $n$. 
  
  For a permutation $\pi\in\S_n$, 
a letter $\pi_i$  is called a {\em valley} (resp.~{\em peak, double descent, double ascent}) of $\pi$ if $\pi_{i-1}>\pi_i<\pi_{i+1}$ (resp.~$\pi_{i-1}<\pi_i>\pi_{i+1}$, $\pi_{i-1}>\pi_i>\pi_{i+1}$,$\pi_{i-1}<\pi_i<\pi_{i+1}$), where    $\pi_0=\pi_{n+1}=0$ by convention.
The Fran\c{c}on--Viennot bijection $\psi_{FV}: \S_n\rightarrow\L_{n-1}$ can be defined as $\psi_{FV}(\pi)=(w,\mu)$, where for each $i\in[n-1]$: 
$$
  w_i=\left\{
  \begin{array}{ll}
  U&\mbox{if $i$ is a valley of $\pi$},  \\
 D&\mbox{if $i$ is a peak of $\pi$},  \\
   H_b&\mbox{if $i$ is a double descent of $\pi$},\\
  H_r &\mbox{if $i$ is a double ascent of $\pi$},
  \end{array}
  \right.
  $$
  and $\mu_i$ is the number of $\underline{31}2$-patterns with $i$ representing the $2$, i.e., 
$$\mu_i=(\underline{31}2)_i(\pi):=1+\#\{j: j<k\text{ and } \pi_j<\pi_k=i<\pi_{j-1}\}.$$
See Fig.~\ref{bij:vie} for an example of the bijection $\psi_{FV}$ for $\pi=512439786$. 
\begin{figure}
\begin{center}
\begin{tikzpicture}[scale=.5]
\draw[step=1,color=gray] (-1,2) grid (7,5); 
\draw[step=1,color=gray] (10,0) grid (16,6); 
 \draw(8.5,3) node{$\mapsto$};
  \draw(8.5,3.7) node{$\phi$};
 
 %%%%%%%%%%%%%%%%%%1

 \green{ \draw [very thick]
 (-1,2)--(0,3) (12,0)--(13,0) (10,2)--(10,3);
 }
  \draw(12,0) node{\circle*{3}};
  
\draw(10,2) node{\circle*{3}};
 
 %%%%%%%%%%%%%%%%%%%%2

\red{ \draw [very thick]
(0,3)--(1,3) (13,0)--(13,1) (10,3)--(10,4);
 }

 %%%%%%%%%%%%%%%%%%%%3

\green{ \draw [very thick]
(1,3)--(2,4) (13,1)--(14,1) (10,4)--(10,5);
 }
 
  %%%%%%%%%%%%%%%%%%%%4

\green{ \draw [very thick]
(2,4)--(3,3) (14,1)--(14,2) (10,5)--(11,5);
 }
 
   %%%%%%%%%%%%%%%%%%%%5

\green{ \draw [very thick]
(3,3)--(4,2) (14,2)--(14,3) (11,5)--(12,5);
 }
  %%%%%%%%%%%%%%%%%%%%6

\blue{ \draw [very thick]
(4,2)--(5,2) (14,3)--(15,3) (12,5)--(13,5);
 }
 
  %%%%%%%%%%%%%%%%%%%%7

\green{ \draw [very thick]
(5,2)--(6,3) (15,3)--(16,3) (13,5)--(13,6);
 }
 %%%%%%%%%%%%%%%%%%%%8

\green{ \draw [very thick]
(6,3)--(7,2) (16,3)--(16,4) (13,6)--(14,6);
 }
  \draw(16,4) node{\circle*{3}}; 
\draw(14,6) node{\circle*{3}};

 %%%%%%%%%%%%%%%%%%middle path
 %%%%%%%%%%%%%%%%%%%%%%%%%%
 %%%%%%%%%%%%%%%%%%%%
\draw(11,1) node{\circle*{3}};
 \draw(-1.5,1.3) node{$\mu:$};
  \draw [dashed](11,1)--(12,0);
    \draw(-0.5,1.3) node{$1$};

 \draw(-3,3) node{$\pi\longmapsto$};  
  \draw(-2.5,3.7) node{$\psi_{FV}$};    

 \draw(0.5,1.3) node{$2$};
\draw [very thick]
(11,1)--(11,2);
 \draw [dashed](11,2)--(13,0);

 \draw(1.5,1.3) node{$2$};
\draw [very thick]
(11,2)--(11,3);
 \draw [dashed](11,3)--(13,1);

 \draw(2.5,1.3) node{$2$};
\draw [very thick]
(11,3)--(12,3);
 \draw [dashed](12,3)--(14,1);

 \draw(3.5,1.3) node{$1$};
\draw [very thick]
(12,3)--(13,3);
 \draw [dashed](13,3)--(14,2);

 \draw(4.5,1.3) node{$1$};
\draw [very thick]
(13,3)--(13,4);
 \draw [dashed](13,4)--(14,3);

 \draw(5.5,1.3) node{$1$};
\draw [very thick]
(13,4)--(14,4);
 \draw [dashed](14,4)--(15,3);

 \draw(6.5,1.3) node{$2$};
\draw [very thick]
(14,4)--(14,5);
 \draw [dashed](14,5)--(16,3);

\draw [very thick]
(14,5)--(15,5);
\draw(15,5) node{\circle*{3}};
 \draw [dashed](15,5)--(16,4);

\end{tikzpicture}
\end{center}
\caption{An example of the two steps of $\Psi=\phi\circ\psi_{FV}$ for $\pi=512439786$.\label{bij:vie}}
\end{figure}
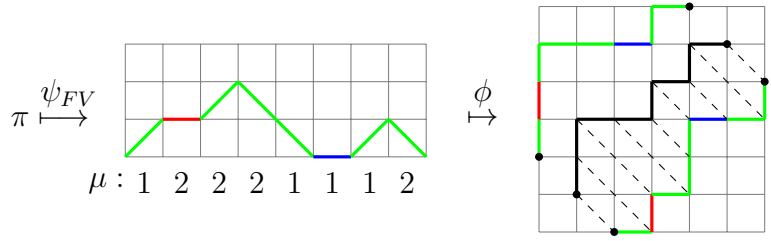

The inverse algorithm $\psi_{FV}^{-1}$ building a permutation $\pi$ (in $n$ steps) from a Laguerre history $(w,\mu)\in\L_{n-1}$ may be described  iteratively as:
\begin{itemize}
\item Initialization: $\pi=\diamond$;
\item At the $i$-th ($1\leq i\leq n-1$) step of the algorithm, replace the $\mu_i$-th $\diamond$ (from left to right) of $\pi$ by
$$
\begin{cases}
\,\diamond i\diamond& \text{if $w_i=U$},\\
\, i\diamond& \text{if $w_i=H_r$},\\
 \, i& \text{if $w_i=D$},\\
 \, \diamond i& \text{if $w_i=H_b$};
\end{cases}
$$
\item The final permutation is obtained by replacing  the last remaining $\diamond$ by $n$. 
\end{itemize}
For example, if $(w,\mu)=(UH_rUDDH_bUD,(1,2,2,2,1,1,1,2))\in\L_{8}$ is the Laguerre history in Fig.~\ref{bij:vie}, then $\pi=\psi_{FV}^{-1}(w,\mu)$ is built as follows:
\begin{align*}
\pi&=\diamond\rightarrow \diamond1\diamond\rightarrow \diamond12\diamond\rightarrow \diamond12\diamond3\diamond\rightarrow \diamond1243\diamond\rightarrow 51243\diamond\rightarrow 51243\diamond6\\
&\quad\rightarrow 51243\diamond7\diamond6\rightarrow 51243\diamond786\rightarrow 512439786. 
\end{align*}

A special Laguerre history $(w,\mu)\in\L_n$ satisfying the following two conditions is called a {\em Baxter history}:
\begin{itemize}
\item whenever $w_i=U$ or $w_i=H_b$, we have   $\mu_{i+1}=\begin{cases}\mu_i\text{ or}\\
 \mu_i+1;
\end{cases}$ 
\item whenever $w_i=D$ or $w_i=H_r$, we have   $\mu_{i+1}=\begin{cases}\mu_i\text{ or}\\
 \mu_i-1.
\end{cases}$
\end{itemize}
Denote by $\B_n$ the set of all Baxter histories  of length $n$. Viennot~\cite{Vie} proved that the Fran\c{c}on--Viennot bijection $\psi_{FV}$ restricted to a bijection between $\Bax_n$ and $\B_{n-1}$, which forms the first step of $\Psi$.

Let $\Tlp_n:=\bigcup_{k=0}^{n-1}\Tlp_{n,k}$. The second step $\phi:\B_{n-1}\rightarrow\Tlp_n$ of  $\Psi$ is defined as follows. Given a Baxter history $(w,\mu)\in\B_{n-1}$, define $\phi(w,\mu)\in\Tlp_n$ to be the triple of lattice paths such that
\begin{itemize}
\item the top one and the bottom one are determined by the $2$-colored Motzkin path $w$ by requiring that the $i$-th step of the top one (resp.~bottom one) is
\begin{enumerate}
\item vertical (resp.~horizontal) if $w_i=U$,
\item horizontal (resp.~vertical) if $w_i=D$,
\item vertical (resp.~vertical) if $w_i=H_r$,
\item horizontal (resp.~horizontal) if $w_i=H_b$;
\end{enumerate}
\item the middle one is determined by the weight function $\mu$ and the bottom one by requiring that the coordinate of the starting point of the $i$-th step of the middle one is 
$$
(x_i-\mu_i,y_i+\mu_i)\quad\text{for $1\leq i\leq n-1$}
$$
if $(x_i,y_i)$ is the coordinate of the starting point of the $i$-th step of the bottom one.
\end{itemize}
See Fig.~\ref{bij:vie} for an example of $\phi$ with $(w,\mu)=(UH_rUDDH_bUD,(1,2,2,2,1,1,1,2))\in\B_{8}$.   
 Viennot's original bijection $\Psi:\Bax_n\rightarrow\Tlp_n$ is then set to be $\phi\circ\psi_{FV}$, the functional composition of $\psi_{FV}$ and $\phi$.

 For any permutation $\pi\in\S_n$, introduce the variation of $\DT(\pi)$ as 
 $$
 \widehat{\DT}(\pi):=(\DT(\pi)\cup\{\pi_n\})\setminus\{n\}. 
 $$
 Then  Viennot's original bijection $\Psi$ admits the following direct description. 
 \begin{lemma}\label{dir:vie}
 For a given $\pi\in\Bax_n$, write $\Psi(\pi)=(P_b,P_m,P_t)$ where $P_b$ is the bottom path, $P_m$ is the middle path and $P_t$ is the top path in $\Psi(\pi)$. Then
 \begin{enumerate}
 \item the $i$-th step of $P_b$ is horizontal iff $i\in\DB(\pi)$, i.e., $P_b$ encodes $\DB(\pi)$;
 \item the $i$-th step of $P_m$ is horizontal iff $i\in\IDES(\pi)$, i.e., $P_m$ encodes $\IDES(\pi)$;
 \item the $i$-th step of $P_t$ is horizontal iff $i\in \widehat{\DT}(\pi)$, i.e., $P_t$ encodes $ \widehat{\DT}(\pi)$.
 \end{enumerate}
 \end{lemma}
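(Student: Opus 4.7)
The plan is to unpack each of the three claims by tracing through the two steps of $\Psi=\phi\circ\psi_{FV}$. For the bottom and top paths the claim is essentially immediate. By the definition of $\phi$, the $i$-th step of $P_b$ is horizontal iff $w_i\in\{U,H_b\}$, which by the Fran\c{c}on--Viennot rule means the letter $i$ is a valley or a double descent of $\pi$. Under the convention $\pi_0=0$, this is equivalent to saying that the letter immediately preceding $i$ in $\pi$ is strictly larger than $i$, i.e., $i\in\DB(\pi)$. Similarly, the $i$-th step of $P_t$ is horizontal iff $w_i\in\{D,H_b\}$, i.e., $i$ is a peak or a double descent, which under the convention $\pi_{n+1}=0$ means the letter immediately following $i$ in $\pi$ is strictly smaller. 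The only extra index that this boundary convention introduces is $i=\pi_n$ (and no extra index is introduced when $\pi_n=n$, because then $n\notin[n-1]$); since the letter $n$ itself is automatically excluded from the path (which only has $n-1$ steps), the set of indices $i\in[n-1]$ with $w_i\in\{D,H_b\}$ equals exactly $\widehat{\DT}(\pi)=(\DT(\pi)\cup\{\pi_n\})\setminus\{n\}$.

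The middle path $P_m$ is more delicate, since it involves the weight $\mu$. Its $i$-th step starts at $(x_i-\mu_i,y_i+\mu_i)$, where $(x_i,y_i)$ is the starting point of the $i$-th step of $P_b$. Computing the displacement to the $(i+1)$-st starting point of $P_m$, using both the type of $w_i$ (which dictates whether the $i$-th step of $P_b$ goes right or up) and the Baxter-history inequality on $\mu_{i+1}-\mu_i$, a short calculation shows that the $i$-th step of $P_m$ is horizontal precisely in two cases: either $w_i\in\{U,H_b\}$ and $\mu_{i+1}=\mu_i$, or $w_i\in\{D,H_r\}$ and $\mu_{i+1}=\mu_i-1$. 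The boundary index $i=n-1$ is handled by the convention $\mu_n:=1$, which is forced because the Motzkin path $w$ returns to height $0$, leaving a unique slot into which the letter $n$ will be inserted by $\psi_{FV}^{-1}$.

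To match this horizontal-step condition with $\IDES(\pi)$, I interpret it via the insertion rule in $\psi_{FV}^{-1}$. At the start of step $i+1$ the partial permutation contains the letters $1,\ldots,i$ with a collection of $\diamond$-slots numbered from left to right, and $i+1$ is placed into the $\mu_{i+1}$-th slot. A case analysis on $w_i$, using the rewriting rules $\diamond\mapsto \diamond i\diamond,\, i\diamond,\, \diamond i,\, i$ for $w_i=U,H_r,H_b,D$ respectively, tracks how the old slot containing $i$ transforms and how the slot indices shift relative to the letter $i$. In each of the four cases, the horizontal condition on $(w_i,\mu_i,\mu_{i+1})$ isolated above is equivalent to ``$i+1$ is inserted strictly to the left of $i$'', which is in turn equivalent to $i+1$ appearing before $i$ in the final permutation $\pi$, i.e., $i\in\IDES(\pi)$. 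The boundary case $i=n-1$ works identically with $\mu_n=1$: the final letter $n$ goes to the unique remaining slot, which lies to the left of $n-1$ precisely when the identified condition holds.

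The main obstacle is this case analysis for the middle path: there are four types for $w_i$, each with two permitted values of $\mu_{i+1}-\mu_i$ under the Baxter constraint, and for every combination both the coordinate computation (horizontal vs.\ vertical step of $P_m$) and the slot bookkeeping (left vs.\ right of $i$) must line up with the same dichotomy. Once the slot viewpoint of $\psi_{FV}^{-1}$ is set up, the verification is mechanical, but keeping the boundary convention $\mu_n=1$ coherent with the forced structure at the end of the algorithm requires a little care.
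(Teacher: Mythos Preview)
Your argument is correct. Parts (1) and (3) are handled exactly as in the paper: unwinding $\phi$ shows that the $i$-th step of $P_b$ (resp.~$P_t$) is horizontal iff $w_i\in\{U,H_b\}$ (resp.~$w_i\in\{D,H_b\}$), and the Fran\c{c}on--Viennot dictionary together with the boundary conventions $\pi_0=\pi_{n+1}=0$ translates these into $i\in\DB(\pi)$ and $i\in\widehat{\DT}(\pi)$.

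For part (2) you take a genuinely different route from the paper. The paper works on the \emph{forward} side of $\psi_{FV}$: it splits into the same four cases determined by $(w_i,\mu_{i+1}-\mu_i)$ but then uses the pattern-counting definition $\mu_i=1+(\underline{31}2)_i(\pi)$ and argues by contradiction, producing or ruling out a $\underline{31}2$ occurrence that forces $\mu_{i+1}$ to violate the assumed value. You instead work on the \emph{inverse} side $\psi_{FV}^{-1}$: after the $i$-th insertion the letter $i$ sits with slots to its left at indices $1,\ldots,\mu_i$ when $w_i\in\{U,H_b\}$ and at indices $1,\ldots,\mu_i-1$ when $w_i\in\{D,H_r\}$, so the Baxter constraint $\mu_{i+1}\in\{\mu_i,\mu_i\pm1\}$ immediately identifies which of the two permitted values places $i{+}1$ to the left of $i$, and this matches your horizontal-step computation for $P_m$ on the nose. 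Your approach is more self-contained and avoids the contradiction arguments; the paper's approach has the virtue of staying with the intrinsic $\underline{31}2$ statistic and never invoking the inverse algorithm. Your boundary convention $\mu_n:=1$ is also the right one, since the endpoint $(k+1,n-k)$ of $P_m$ is exactly $(x_n-1,y_n+1)$ for the endpoint $(x_n,y_n)=(k+2,n-k-1)$ of $P_b$.
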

 \begin{proof}
 We will prove the three points one by one in the following. Let $\psi_{FV}(\pi)=(w,\mu)$. 
 \begin{enumerate}
 \item By the construction of $\phi$, the $i$-th step of $P_b$ is horizontal iff $w_i=U$ or $w_i=H_b$, which in turn is equivalent to $i$ is a valley or a double descent of $\pi$ according to the definition of $\psi_{FV}(\pi)$. This proves the statement in (1). 
 \item The distribution of the $i$-th step of the middle path $P_m$ and the bottom path $P_b$ have the following four possibilities (see Fig.~\ref{tree:dyck}) that will be treated separately:  
\begin{figure}
\begin{tikzpicture}[scale=0.8]
\draw[-] (0,1) to (1,1) (1,0) to (2,0);
\draw[-] (4,1.5) to (5,1.5) (5.5,0.75) to (5.5,-0.25);
\draw[-] (8.5,0) to (9.5,0) (8,0.5)to(8,1.5);
\draw[-] (12,0.5)to(12,1.5)(13,0.25)to(13,-0.75);
\node at (0,1) {\circle*{4}};
\node at (1,1) {\circle*{4}};
\node at (1,0) {\circle*{4}};
\node at (2,0) {\circle*{4}};
\node at (4,1.5) {\circle*{4}};
\node at (5,1.5) {\circle*{4}};
\node at (5.5,0.75) {\circle*{4}};
\node at (5.5,-0.25) {\circle*{4}};
\node at (8.5,0){\circle*{4}};
\node at (9.5,0){\circle*{4}};
\node at (8,0.5) {\circle*{4}};
\node at (8,1.5) {\circle*{4}};
\node at (12,0.5) {\circle*{4}};
\node at (12,1.5) {\circle*{4}};
\node at (13,0.25) {\circle*{4}};
\node at (13,-0.75) {\circle*{4}};
\node at (1,-1) {$(a)$};
\node at (4.8,-1) {$(b)$};
\node at (8.5,-1){$(c)$};
\node at (12.2,-1){$(d)$};
\end{tikzpicture}
\caption{Four different cases.\label{tree:dyck}}
\end{figure}
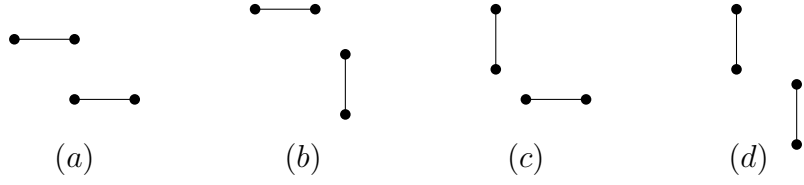

\begin{itemize}
\item Assume that the $i$-th step of $P_m$ and $P_b$ is in case $(a)$, then $i$ is a valley or a double descent in $\pi$ and $\mu$ satisfies $\mu_{i+1}= \mu_i$ (according to the definitions of the two steps of $\Psi$).  
  If $i\notin\IDES(\pi)$, namely, $i+1$ is located after $i$ in $\pi$, then $\pi$ can be written as  $\pi=\pi_1\pi_2\cdots a\,i\cdots(i+1)\cdots \pi_n$ for $a>i+1$. Thus, the subsequence $\underline{ai}(i+1)$ is a $\underline{31}2$-pattern for $\pi$ with  $i+1$ representing $2$, which forces $\mu_{i+1}\geq \mu_i+1$. A contradiction with $\mu_{i+1}= \mu_i$, which proves that $i\in\IDES(\pi)$

\item Assume that the $i$-th step of $P_m$ and $P_b$ is in case $(b)$, then $i$ is a peak or a double ascent in $\pi$ and $\mu$ satisfies $\mu_{i+1}=\mu_i-1$.  In this case, we need to show that $i\in\IDES(\pi)$. If not, then $i+1$ is located after $i$  and so  $\pi=\pi_1\pi_2\cdots c\,i\cdots(i+1)\cdots \pi_n$ for $c<i$, which forces $\mu_{i+1}\geq\mu_i$. A contradiction with $\mu_{i+1}=\mu_i-1$ and thus we have $i\in\IDES(\pi)$

\item Assume that the $i$-th step of $P_m$ and $P_b$ is in case $(c)$, then $i$ is a valley or a double descent in $\pi$ and $\mu$ satisfies $\mu_{i+1}= \mu_i+1$.  Now we need to show that $i\notin\IDES(\pi)$. If not, $i+1$ is located before $i$ in $\pi$ and so $\pi$ can be written as  $\pi=\pi_1\pi_2\cdots (i+1)\cdots\,i\cdots \pi_n$, which forces  $\mu_{i+1}\leq \mu_i$. This contradicts $\mu_{i+1}=\mu_i+1$ and so $i\notin\IDES(\pi)$. 

\item Assume that the $i$-th step of $P_m$ and $P_b$ is in case $(d)$, then $i$ is a peak or a double ascent in $\pi$ and $\mu$ satisfies $\mu_{i+1}= \mu_i$.  In this case, we need to  show that $i\notin\IDES(\pi)$. If not, $i+1$ is located before $i$ in $\pi$ and so $\pi$ can be written as  $\pi=\pi_1\pi_2\cdots (i+1)\cdots a\,i\cdots \pi_n$ with $a<i$.  Then there exists at least one $\underline{31}2$-pattern in the interval $(i+1)\cdots a\,i$  with  $i$ representing $2$, which forces  $\mu_i\geq\mu_{i+1}+1$. This contradicts $\mu_{i+1}=\mu_i$ and so $i\notin\IDES(\pi)$.
\end{itemize}
This proves the statement in (2) in all cases. 
 \item By the construction of $\phi$, the $i$-th step ($1\leq i\leq n-1$) of $P_t$ is horizontal iff $w_i=D$ or $w_i=H_b$, which in turn is equivalent to $i$ is a peak or a double descent of $\pi$ according to the definition of $\psi_{FV}(\pi)$. Since $i$ is a peak or a double descent of $\pi$ (notice that $\pi_{n+1}=0$ by convention) iff $i$ is in $\DT(\pi)\setminus\{n\}$ or $\pi_n=i$,  the statement in (3) follows. 
 \end{enumerate}
 
 The proof of the lemma is complete. 
 \end{proof}

%%%%%%%%%%%%%%%%%%%%%%%%%%%%%%%%
\subsection{The correspondence $\Gamma'$ is injective}%%%%
%%%%%%%%%%%%%%%%%%%%%%%%%%%%%%%%

In view of Lemma~\ref{dir:vie}, the middle and the bottom lattice paths in $\Psi(\pi)$ are in coincidence with those  in $\Gamma'(\pi)$ for any $\pi\in\Bax_n$. But the top lattice paths in $\Psi(\pi)$ and $\Gamma'(\pi)$ are different in general (compare the examples in Fig.~\ref{dilk:trip} and Fig.~\ref{bij:vie}). However, since $\Psi$ is a bijection,  Lemma~\ref{dir:vie} together with  the following lemma implies that $\Gamma'$ is injective.

\begin{lemma}\label{lem:inj}
There does not exist two Baxter permutations $\pi, \pi'\in \Bax_n$ such that
           \begin{align*}
           \widetilde{\DT}(\pi)&=\widetilde{\DT}(\pi'),\\
                     \IDES(\pi)&=\IDES(\pi'),\\
                        \DB(\pi)&=\DB(\pi'),
           \end{align*}
but $\widehat{\DT}(\pi)\neq \widehat{\DT}(\pi')$.
\end{lemma}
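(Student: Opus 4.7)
The plan is to reduce the statement to showing that the last letter $\pi_n$ of a Baxter permutation $\pi\in\Bax_n$ is determined by the triple $(\DB(\pi),\IDES(\pi),\DT(\pi))$, and to prove this by induction on $n$ using Lemma~\ref{lem:chung}.

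First I would observe that $\widetilde{\DT}(\pi)$ and $\DT(\pi)$ carry the same information (via $v\mapsto v-1$), and that $\pi_n\notin\DT(\pi)$ because each value occurs at most once; this gives $\widehat{\DT}(\pi)=(\DT(\pi)\setminus\{n\})\cup\{\pi_n\}$, with $n\in\DT(\pi)$ iff $\pi_n<n$. The hypothesized equalities then force $\pi_n=n\Leftrightarrow\pi'_n=n$, and the case where both equal $n$ immediately makes $\widehat{\DT}(\pi)=\widehat{\DT}(\pi')$, contradicting the assumption. So the lemma reduces to: if $\pi,\pi'\in\Bax_n$ share $(\DB,\IDES,\DT)$ and $\pi_n,\pi'_n\in[n-1]$, then $\pi_n=\pi'_n$.

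I would induct on $n$, the case $n=1$ being vacuous. By Lemma~\ref{lem:chung}, $\pi$ arises by inserting $n$ into some $\sigma\in\Bax_{n-1}$ and similarly $\pi'$ from $\sigma'\in\Bax_{n-1}$; since $\pi_n<n$ the insertion is not at the end, so by the trichotomy from the proof showing $\Gamma'$ is well defined we are in either (i) insertion immediately after a right-to-left maximum $\sigma_i$ of $\sigma$ with $i<n-1$, or (ii) insertion immediately before a left-to-right maximum $\sigma_j$. Case (ii) is characterized by $n-1\in\IDES(\pi)$, so the equality $\IDES(\pi)=\IDES(\pi')$ puts both permutations in the same case. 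In either case $\pi_n=\sigma_{n-1}$ and $\pi'_n=\sigma'_{n-1}$, so it suffices to show $\sigma=\sigma'$: if the reduced triples $(\DB(\sigma),\IDES(\sigma),\DT(\sigma))$ and $(\DB(\sigma'),\IDES(\sigma'),\DT(\sigma'))$ coincide, then by the inductive hypothesis $\widehat{\DT}(\sigma)=\widehat{\DT}(\sigma')$, Lemma~\ref{dir:vie} matches the three paths of $\Psi(\sigma)$ and $\Psi(\sigma')$, and the bijectivity of $\Psi$ forces $\sigma=\sigma'$.

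The main obstacle is verifying the coincidence of the reduced triples. The recursion in the proof of well-definedness of $\Gamma'$ expresses $(\DB(\sigma),\IDES(\sigma),\DT(\sigma))$ in terms of $(\DB(\pi),\IDES(\pi),\DT(\pi))$ together with one \emph{exchanged element}---the value $\sigma_j$ (in case (ii), the letter immediately to the right of $n$ in $\pi$) or $\sigma_i$ (in case (i), immediately to the left of $n$ in $\pi$). The task is to show that this exchanged element is itself determined by the triple $(\DB(\pi),\IDES(\pi),\DT(\pi))$, and it is here that the Baxter pattern-avoidance is essential. My plan is a contradiction argument: two distinct candidates for the exchanged element would produce two Baxter permutations sharing $(\DB,\IDES,\DT)$ but differing by the swap of two specific values, and a careful look at the relative order of these values together with $n$ and the letters flanking $n$ extracts a forbidden occurrence of $2\underline{41}3$ or $3\underline{14}2$ in one of the two permutations. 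Once the exchanged element is pinned down, the inductive machinery above yields $\sigma=\sigma'$, whence $\pi_n=\pi'_n$, the desired contradiction.
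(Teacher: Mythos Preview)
Your opening reduction is correct, and the dichotomy via $n-1\in\IDES(\pi)$ does cleanly separate the two non-terminal insertion types from Lemma~\ref{lem:chung}. The overall strategy---induct on $n$, recover $\sigma$ and $\sigma'$ from $\pi$ and $\pi'$, match their reduced triples, and then invoke the bijectivity of $\Psi$---is genuinely different from the paper's proof, which does not induct at all: there one uses Lemma~\ref{dir:vie} to see that equal $\DB$ and $\IDES$ force $P_b=P'_b$ and $P_m=P'_m$, hence (through the definition of $\phi$) equal weight functions $\mu=\mu'$, and then runs the explicit inverse algorithm $\psi_{FV}^{-1}$ on the two Laguerre histories side by side to obtain a direct contradiction with $\pi'_n=j$.

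Your proposal, however, has a real gap exactly where you flag the ``main obstacle.'' To match the reduced triples of $\sigma$ and $\sigma'$ you must show that the exchanged element---the letter $\sigma_j$ immediately right of $n$ in case~(ii), or $\sigma_i$ immediately left of $n$ in case~(i)---is uniquely determined by $(\DB(\pi),\IDES(\pi),\DT(\pi))$. Your plan asserts that two distinct candidates would produce two Baxter permutations ``differing by the swap of two specific values,'' from which a forbidden $2\underline{41}3$ or $3\underline{14}2$ can be read off. But nothing you have established forces $\pi$ and $\pi'$ to differ by a single transposition; a priori two Baxter permutations sharing $(\DB,\IDES,\DT)$ may differ in many positions, and you give no mechanism for localizing an occurrence of a forbidden pattern near $n$. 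The phrase ``a careful look \ldots\ extracts a forbidden occurrence'' is not a proof, and the structural claim preceding it is unjustified. Pinning down the exchanged element from the triple alone is essentially as hard as the lemma itself; the paper sidesteps this difficulty completely by working inside the Fran\c{c}on--Viennot encoding, where the equality $\mu=\mu'$ (coming for free from $P_b=P'_b$ and $P_m=P'_m$) does all the work.
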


\begin{proof}
Note that $\widetilde{\DT}(\pi)=\widetilde{\DT}(\pi')$ is equivalent to $\DT(\pi)=\DT(\pi')$. Recall that  $
 \widehat{\DT}(\pi)=(\DT(\pi)\cup\{\pi_n\})\setminus\{n\}$. 
Assume to the contrary that such two Baxter permutations $\pi$ and $\pi'$ exist. Then $\pi_n\neq n$ and $\pi'_n\neq n$, for otherwise $\DT(\pi)=\DT(\pi')$ would imply that $\widehat{\DT}(\pi)= \widehat{\DT}(\pi')$. The only possibility is that 
\begin{equation}\label{possi}
\pi_n=i\neq\pi'_n=j \text{ for $1\leq i,j\leq n-1$ and } \widehat{\DT}(\pi)\setminus\{i\}=\widehat{\DT}(\pi')\setminus\{j\}.
\end{equation}
We aim to show that condition~\eqref{possi} could not happen when  both $\IDES(\pi)=\IDES(\pi')$ and $\DB(\pi)=\DB(\pi')$ hold.

Without loss of generality, we can assume that~\eqref{possi} holds for $i<j$. We write  $\Psi(\pi)=(P_b,P_m,P_t)$, where $P_b$ is the bottom path, $P_m$ is the middle path and $P_t$ is the top path in $\Psi(\pi)$. Similarly, we write $\Psi(\pi')=(P'_b,P'_m,P'_t)$. Set $\psi_{FV}(\pi)=(w,\mu)$ and  $\psi_{FV}(\pi')=(w',\mu')$. 
Since $\IDES(\pi)=\IDES(\pi')$ and $\DB(\pi)=\DB(\pi')$, we have $P_m=P'_m$ and $P_b=P'_b$ by Lemma~\ref{dir:vie}. It then follows from the construction of $\phi$ that the weight functions $\mu$ and $\mu'$ are equal. On the other hand, by Lemma~\ref{dir:vie} we see that condition~\eqref{possi} implies the only difference between $P_t$ and $P'_t$ are in $i$-th step and in $j$-th step. More precisely, the   $i$-th step of $P_t$ is horizontal and the $j$-th step of $P_t$ is vertical, while the  $i$-th step of $P'_t$ is vertical  and the $j$-th step of $P'_t$ is horizontal. We need to distinguish two cases according to the $i$-th step of $P_b=P'_b$ is vertical or horizontal.  Let $\pi^{(k)}$ (resp.~$\pi'^{(k)}$) be the word on $[k]\cup\{\diamond\}$ after applying the $k$-th step in the algorithm $\psi^{-1}_{FV}$ to retrieve $\pi$ (resp.~$\pi'$). 
\begin{itemize}
\item If the $i$-th step of $P_b=P'_b$ is vertical, then from the construction of $\phi$ we have $w_i=D$ and $w'_i=H_r$. As $\pi_n=i$, $\pi^{(i)}$ is obtained from $\pi^{(i-1)}$ by replacing  the rightmost $\diamond$ by $i$, while $\pi'^{(i)}$ is obtained from $\pi'^{(i-1)}=\pi^{(i-1)}$ by   replacing the rightmost $\diamond$ (since $\mu_i=\mu'_i$) by $i\diamond$. Since $\mu_k=\mu'_k$ for $k=i+1,\ldots,j-1$, the number of $\diamond$'s in $\pi'^{(j-1)}$ is one more than that in $\pi^{(j-1)}$, which force $\pi'_n\neq j$ (for otherwise,  the rightmost $\diamond$ of $\pi'^{(j-1)}$ must be replaced by $j$ or $\diamond j$, which is impossible because $\mu'_j=\mu_j$), a contradiction.

\item If the $i$-th step of $P_b=P'_b$ is horizontal, then from the construction of $\phi$ we have $w_i=H_b$ and $w'_i=U$. As $\pi_n=i$, $\pi^{(i)}$ is obtained from $\pi^{(i-1)}$ by replacing  the rightmost $\diamond$ by $\diamond i$, while $\pi'^{(i)}$ is obtained from $\pi'^{(i-1)}=\pi^{(i-1)}$ by  replacing the rightmost $\diamond$ (since $\mu_i=\mu'_i$) by $\diamond i\diamond$. Now the same reason as the first case leads to a contradiction with $\pi'_n=j$.
\end{itemize}

Since both cases lead to contradictions,  condition~\eqref{possi} could not happen and the proof of the lemma is complete. 
\end{proof}

Because of Lemma~\ref{lem:inj}, the algorithm for the inverse $(\Gamma')^{-1}$  can  be construct  as follows.  {\bf The algorithm for the inverse $(\Gamma')^{-1}$.} Given a triple of lattice path $(P_b,P_m,P_t)\in\Tlp_{n,k}$, we can retrieve the permutation $\pi=(\Gamma')^{-1}(P_b,P_m,P_t)$ according to the following two cases.
\begin{itemize}
\item If the last step of $P_t$ is vertical, then form the new top path $P'_t$ starting at $(0,2)$ of the same length that encodes $\{i+1:\text{ the $i$-th step of $P_t$ is horizontal}\}$. Define $\pi$ to be the  permutation  $\Psi^{-1}(P_b,P_m,P'_t)$. 
\item If the last step of $P_t$ is horizontal, then let 
$$
S:=\{i+1:\text{ the $i$-th step of $P_t$ is horizontal}\}\setminus\{n\}.
$$
Find the unique integer $j\in[n-1]\setminus S$ such that
\begin{enumerate}
\item the new top path $P'_t$ starting at $(0,2)$ of the same length that encodes $S\cup\{j\}$ does not intersect $P_m$;
\item and the distance between the starting point of the $j$-th step of $P_m$ and that of $P'_t$ is $\sqrt{2}$ (i.e., a diagonal unit).
\end{enumerate} 
Define $\pi$ to be the  permutation   $\Psi^{-1}(P_b,P_m,P'_t)$. 

By the construction of the algorithm  $\psi_{FV}^{-1}$ for building a permutation $\pi$  from a Laguerre history $(w,\mu)\in\L_{n-1}$, in order to guarantee that
$\pi_n=j$, we must require that $j$ is the smallest index with $w_j=D$ or $H_b$ and $\mu_j=h_j(w)$, which under $\phi$ is equivalent to the distance requirement in (2) above. The fact that $\Gamma'$ is a bijection guarantees the  existence and uniqueness  of such a $j$.
\end{itemize}

%%%%%%%%%%%%%%%%%% 
\section*{Acknowledgement} %%%%
%%%%%%%%%%%%%%%%%%
The authors are very grateful to  professor Viennot for explaining his bijective proof of~\eqref{eq:chung} in~\cite{Vie}. 
This work was supported
by the National Science Foundation of China grants 11871247 and the project of Qilu Young Scholars
of Shandong University.


\begin{thebibliography}{99}

\bibitem{Bax} G. Baxter, On fixed points of the composite of commuting functions, Proc. Amer. Math. Soc., {\bf15} (1964), 851--855.

\bibitem{bo}M. Bousquet-M\'elou, Four classes of pattern-avoiding permutations under one roof: generating trees with two labels, Electron. J. Combin., {\bf9} (2003), \#R19. 

\bibitem{chung} F.R.K. Chung, R.L. Graham, V.E. Hoggatt, Jr. and M. Kleiman, The number of Baxter permutations, J. Combin. Theory Ser. A, {\bf24} (1978), 382--394. 

\bibitem{Vie2} R. Cori, S. Dulucq and G. Viennot, Shuffle of parenthesis systems and Baxter permutations, 
J. Combin. Theory Ser. A, {\bf43} (1986), 1--22.

\bibitem{Dil} K. Dilks, {\it Involutions on Baxter Objects, and $q$-Gamma Nonnegativity}, Ph.D. thesis, University of Minnesota, 2015.

\bibitem{Du} D. Dumont, Interpr\'etations combinatoires des numbers de Genocchi (in French), Duke Math. J., {\bf41} (1974), 305--318.

\bibitem{Noy}S. Felsner, \'E. Fusy, M. Noy and D. Orden, 
Bijections for Baxter families and related objects, J. Combin. Theory Ser. A, {\bf118} (2011),  993--1020.

\bibitem{Fran}  J. Fran\c{c}on and G. Viennot,  Permutations selon leurs pics, creux, doubles mont\'ees et double descentes, nombres d'Euler et nombres de Genocchi (in French),  Discrete Math., {\bf28} (1979), 21--35.

\bibitem{GL} O. Guibert  and S. Linusson, 
Doubly alternating Baxter permutations are Catalan (FPSAC 1997),
Discrete Math., {\bf217} (2000), 157--166. 

\bibitem{LK} Z. Lin and D. Kim, Refined restricted inversion sequences,  Ann. Comb., {\bf25} (2021), 849--875.

\bibitem{LWZ} Z. Lin, D.G.L. Wang and T. Zhao, A decomposition of ballot permutations, pattern avoidance and Gessel walks, \href{https://arxiv.org/abs/2103.04599}{arXiv:2103.04599}.

\bibitem{oeis} OEIS Foundation Inc., The On-Line Encyclopedia of Integer Sequences,  \href{http://oeis.org}{http://oeis.org}, 2021.

\bibitem{St2} R.P. Stanley, {\em Enumerative Combinatorics, vol. 2}, Cambridge Stud. Adv. Math., vol. 62, Cambridge University Press, Cambridge, 1999.

\bibitem{Vie} G. Viennot, A bijective proof for the number of Baxter permutations, 3rd S\'eminaire Lotharingien de Combinatoire, Le Klebach, 1981. 

\bibitem{Yan} S.H.F. Yan and Y. Yu,  Pattern-avoiding inversion sequences and open partition diagrams, Theoret. Comput. Sci., {\bf841} (2020), 186--197.




\end{thebibliography}
\end{document}